\newtheorem{theorem}{Theorem}[section]
\newtheorem{corollary}[theorem] {Corollary}
\newtheorem{definition}[theorem]{Definition}
\newtheorem{example}[theorem]{Example}
\newtheorem{problem}[theorem]{Problem}
\title{This is the title}
\begin{document}
\hrule\hrule\hrule\hrule\hrule
\vspace{0.3cm}	
\begin{center}
{\bf{$p$-adic Equiangular Lines and  $p$-adic van Lint-Seidel Relative Bound}}\\
\vspace{0.3cm}
\hrule\hrule\hrule\hrule\hrule
\vspace{0.3cm}
\textbf{K. Mahesh Krishna}\\
School of Mathematics and Natural Sciences\\
Chanakya University Global Campus\\
Haraluru Village, Near Kempe Gowda International Airport (BIAL)\\
Devanahalli Taluk, 	Bengaluru  Rural District\\
Karnataka State 562 110 India\\
Email: kmaheshak@gmail.com\\

Date: \today
\end{center}

\hrule\hrule
\vspace{0.5cm}
\textbf{Abstract}: We introduce the notion of $p$-adic equiangular lines and derive the first fundamental relation between common angle,  dimension of the space and the number of  lines. More precisely, we show that if $\{\tau_j\}_{j=1}^n$  is a collection of  $p$-adic  $\gamma$-equiangular lines in $\mathbb{Q}^d_p$, then 
\begin{align*}
(1)	 \quad\quad \quad \quad |n|^2\leq |d|\max\{|n|, \gamma^2 \}.
\end{align*}	
We  call  Inequality (1) as the $p$-adic van Lint-Seidel relative bound. We believe that this complements fundamental van Lint-Seidel  \textit{[Indag. Math., 1966]} relative bound for equiangular lines in the $p$-adic case.

\textbf{Keywords}:   Equiangular lines, $p$-adic Hilbert space.

\textbf{Mathematics Subject Classification (2020)}: 12J25, 46S10, 47S10, 11D88.\\

\hrule

\hrule
\section{Introduction}
Let $d \in \mathbb{N}$ and $\gamma \in  [0,1]$. Recall that a collection $\{\tau_j\}_{j=1}^n$  of unit vectors  in $\mathbb{R}^d$	is said to be \textbf{$\gamma$-equiangular  lines} \cite{HAANTJES, LEMMENSSEIDEL} if 
	\begin{align*}
			|\langle \tau_j, \tau_k\rangle|=\gamma, \quad \forall 1\leq j, k \leq n, j \neq k.
	\end{align*}	
A fundamental problem associated with equiangular lines is  the following.
\begin{problem}\label{1} 
	Given $d \in \mathbb{N}$ and $\gamma \in  [0,1]$, what is the upper bound on $n$ such that there exists a collection $\{\tau_j\}_{j=1}^n$  of  $\gamma$-equiangular lines in $\mathbb{R}^d$?
	\end{problem}
An answer to Problem \ref{1}, which is   fundamental driving force in the study of equiangular lines, is the following result of van Lint and Seidel \cite{VANLINTSEIDEL, LEMMENSSEIDEL}.
\begin{theorem} \cite{VANLINTSEIDEL, LEMMENSSEIDEL} (\textbf{van Lint-Seidel Relative Bound}) \label{VSTHEOREM}
Let $\{\tau_j\}_{j=1}^n$  	be  $\gamma$-equiangular  lines in $\mathbb{R}^d$. 
Then 
\begin{align*}
n(1-d\gamma^2) \leq d(1-\gamma^2).
\end{align*}
In particular, if 
\begin{align*}
	\gamma <\frac{1}{\sqrt{d}},
\end{align*}
then 
\begin{align*}
	n\leq \frac{d(1-\gamma^2)}{1-d\gamma^2}.
\end{align*}
\end{theorem}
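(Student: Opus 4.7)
The plan is to bound $n$ by studying the $n\times n$ Gram matrix $G$ of the system, whose entries are $G_{jk}=\langle\tau_j,\tau_k\rangle$. Writing $G=M^{\top}M$ for the $d\times n$ matrix $M$ whose columns are the $\tau_j$, one sees at once that $G$ is symmetric positive semi-definite and has $\operatorname{rank}(G)\le d$, so in particular $G$ possesses at most $d$ nonzero eigenvalues $\lambda_1,\ldots,\lambda_d$ (padded with zeros if needed). The unit-vector hypothesis gives $G_{jj}=1$, so $\operatorname{Tr}(G)=\sum_{i}\lambda_i=n$; the $\gamma$-equiangular hypothesis gives $G_{jk}^{2}=\gamma^{2}$ for $j\ne k$, and since $G$ is symmetric one computes $\operatorname{Tr}(G^{2})=\sum_{j,k}G_{jk}^{2}=n+n(n-1)\gamma^{2}=\sum_{i}\lambda_i^{2}$.

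The decisive step is the Cauchy--Schwarz inequality applied to the vectors $(\lambda_1,\ldots,\lambda_d)$ and $(1,\ldots,1)$ in $\mathbb{R}^{d}$, which yields $\bigl(\sum_{i}\lambda_i\bigr)^{2}\le d\sum_{i}\lambda_i^{2}$, i.e.\ $n^{2}\le d\bigl(n+n(n-1)\gamma^{2}\bigr)=d\,n\bigl(1+(n-1)\gamma^{2}\bigr)$. Dividing through by $n$, expanding, and collecting the $\gamma^{2}$-terms on the left yields the stated bound $n(1-d\gamma^{2})\le d(1-\gamma^{2})$. When $\gamma<1/\sqrt{d}$ the coefficient $1-d\gamma^{2}$ is strictly positive, so dividing both sides by it is legitimate and produces the claimed closed-form upper bound on $n$.

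The only genuinely delicate point is recognising that Cauchy--Schwarz must be applied over just $d$ terms rather than over all $n$ eigenvalues of $G$; this is precisely where the rank bound $\operatorname{rank}(G)\le d$, and hence the ambient dimension, enters the argument. A naive sum over all $n$ eigenvalues would give the tautology $n^{2}\le n\operatorname{Tr}(G^{2})$ and carry no information about $d$. Once this dimension-aware version of the inequality is deployed, the remaining work is purely algebraic rearrangement.
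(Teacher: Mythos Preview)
Your proof is correct and is the standard argument. The paper does not actually supply a proof of this classical theorem---it is merely cited as background---but your approach (compute $\operatorname{Tr}(G)$ and $\operatorname{Tr}(G^2)$, then apply Cauchy--Schwarz to the at most $d$ nonzero eigenvalues) is precisely the template the paper follows in its proof of the p-adic analogue (Theorem~\ref{PE}), where the Cauchy--Schwarz step is replaced by the eigenvalue hypothesis in Definition~\ref{EL} and the final algebraic rearrangement by the ultrametric inequality.
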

 While deriving $p$-adic Welch bounds, the notion of $p$-adic equiangular lines is hinted in \cite{MAHESHKRISHNA}. In this paper, we make  it more rigorous and derive a fundamental relation which complements Theorem \ref{VSTHEOREM}.

\section{$p$-adic Equiangular Lines}
We begin by recalling the   notion of  $p$-adic Hilbert space.  We refer  \cite{KALISCH, DIAGANABOOK, DIAGANARAMAROSON, KHRENNIKOV, ALBEVERIO} for more  information on $p$-adic Hilbert spaces.
\begin{definition}\cite{DIAGANABOOK, DIAGANARAMAROSON} \label{PADICDEF}
	Let $\mathbb{K}$ be a non-Archimedean  valued field with valuation $|\cdot|$ and $\mathcal{X}$ be a non-Archimedean Banach space with norm $\|\cdot\|$ over $\mathbb{K}$. We say that $\mathcal{X}$ is a \textbf{$p$-adic Hilbert space} if there is a map (called as $p$-adic inner product) $\langle \cdot, \cdot \rangle: \mathcal{X} \times \mathcal{X} \to \mathbb{K}$ satisfying following.
	\begin{enumerate}[\upshape (i)]
		\item If $x \in \mathcal{X}$ is such that $\langle x,y \rangle =0$ for all $y \in \mathcal{X}$, then $x=0$.
		\item $\langle x, y \rangle =\langle y, x \rangle$ for all $x,y \in \mathcal{X}$.
		\item $\langle \alpha x, y+z \rangle =\alpha (\langle x,  y \rangle+\langle x,z\rangle)$ for all  $\alpha  \in \mathbb{K}$, for all $x,y,z \in \mathcal{X}$.
		\item $|\langle x, y \rangle |\leq \|x\|\|y\|$ for all $x,y \in \mathcal{X}$.
	\end{enumerate}
\end{definition}
The following is the  standard example of a $p$-adic Hilbert space which we consider in the paper.
\begin{example}\cite{KALISCH}
	Let $p$ be a prime. For $d \in \mathbb{N}$, let $\mathbb{Q}_p^d$ be the standard $p$-adic Hilbert space equipped with the inner product 
	\begin{align*}
		\langle (a_j)_{j=1}^d,(b_j)_{j=1}^d\rangle := \sum_{j=1}^da_jb_j,  \quad \forall (a_j)_{j=1}^d,(b_j)_{j=1}^d \in \mathbb{Q}_p^d
	\end{align*}
	and the norm 
	\begin{align*}
		\|(x_j)_{j=1}^d\|:= \max_{1\leq j \leq d}|x_j|, \quad \forall (x_j)_{j=1}^d\in 	\mathbb{Q}_p^d.
	\end{align*}
\end{example}
Through various trials, we believe that the following is a correct definition of equiangular lines in the $p$-adic setting.
\begin{definition}\label{EL}
		Let $p$ be a prime, $d \in \mathbb{N}$ and $\gamma \geq0$. A collection $\{\tau_j\}_{j=1}^n$  in $\mathbb{Q}_p^d$	is said to be \textbf{$p$-adic $\gamma$-equiangular  lines} if the following conditions hold.
	\begin{enumerate}[\upshape(i)]
		\item $\langle \tau_j, \tau_j\rangle =1, \quad \forall 1\leq j \leq n$.
		\item $	|\langle \tau_j, \tau_k\rangle| =\gamma, \forall 1\leq j, k \leq n, j \neq k$.
		\item The operator 
		\begin{align*}
			S_\tau : \mathbb{Q}_p^d\ni x \mapsto \sum_{j=1}^n\langle x, \tau_j\rangle \tau_j \in \mathbb{Q}_p^d
		\end{align*}
	is similar  to a diagonal operator over $\mathbb{Q}_p$  with eigenvalues $\lambda_1, \dots, \lambda_d \in \mathbb{Q}_p$ satisfying 
	\begin{align*}
		\left|\sum_{j=1}^{d}\lambda_j\right|^2\leq |d| \left|\sum_{j=1}^{d}\lambda_j^2\right|.	
	\end{align*}
	\end{enumerate}
\end{definition}
To set up Definition \ref{EL}, we are mainly motivated by the following observations.
\begin{enumerate}
	\item Recall that  a basis  $\{\tau_j\}_{j=1}^n$  for  $\mathbb{Q}^d_p$ is said to be an orthonormal basis if $\langle \tau_j, \tau_k\rangle =\delta_{j,k}$ for all $1\leq j,k \leq n$. We then naturally have 
	\begin{align*}
		\sum_{j=1}^n\langle x, \tau_j\rangle \tau_j =x, \quad \forall x \in \mathbb{Q}_p^d.
	\end{align*}
\item In analogy to tight frames in Hilbert spaces, a generalization of orthonormal bases is the following:  A collection $\{\tau_j\}_{j=1}^n$  in   $\mathbb{Q}^d_p$ is said to be  a $p$-adic tight frame for $\mathbb{Q}^d_p$  if there is a nonzero element $b \in \mathbb{Q}_p$ such that 
\begin{align*}
	\sum_{j=1}^n\langle x, \tau_j\rangle \tau_j =bx, \quad \forall x \in \mathbb{Q}_p^d.
\end{align*}
Note that the operator corresponding to a tight frame operator is a scalar operator and is diagonalizable. Since equiangular lines correspond to tight frames in both real and complex Hilbert spaces, and diagonalization is satisfied by symmetric operators like those ones given by the formula in condition (iii), it is reasonable to impose diagonalization in the definition of $p$-adic equiangular lines.
\end{enumerate}
The most general version of $p$-adic equiangular lines can be obtained by considering Definition \ref{EL} without condition (iii). However, we are unable to study this level of generality.\\
The result of this  paper is the following $p$-adic version of Theorem \ref{VSTHEOREM}.
\begin{theorem} (\textbf{$p$-adic van Lint-Seidel  Relative Bound})\label{PE}
	Let $p$ be a prime, $d \in \mathbb{N}$ and $\gamma \geq 0$. If $\{\tau_j\}_{j=1}^n$  is a collection of $p$-adic  $\gamma$-equiangular lines in $\mathbb{Q}^d_p$, then 
	\begin{align*}
		|n|^2\leq |d|\max\{|n|, \gamma^2 \}.
	\end{align*}
In particular, we have the following. 
\begin{enumerate}[\upshape(i)]
	\item If $|n|\leq \gamma^2$, then 
	\begin{align*}
		|n|^2\leq |d|\gamma^2.
	\end{align*}
\item If $|n|\geq \gamma^2$, then 
\begin{align*}
	|n|\leq |d|.
\end{align*}
\end{enumerate}
\end{theorem}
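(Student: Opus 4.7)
The plan is to mimic the classical van Lint--Seidel argument in which one computes two different traces of the frame operator and then applies Cauchy--Schwarz. In the p-adic setting, Cauchy--Schwarz is replaced by the hypothesis (iii) in Definition \ref{EL} combined with the ultrametric (strong triangle) inequality. The operator $S_\tau$ is our central object; since condition (iii) asserts it is similar to a diagonal operator with eigenvalues $\lambda_1,\dots,\lambda_d$, traces are similarity invariant, so $\sum_{i}\lambda_i=\operatorname{tr}(S_\tau)$ and $\sum_{i}\lambda_i^{2}=\operatorname{tr}(S_\tau^{2})$.

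First I would compute $\operatorname{tr}(S_\tau)$. Writing $S_\tau=\sum_{j=1}^{n}\tau_j\tau_j^{T}$ (regarded as a rank-one summand via the symmetric bilinear form), the cyclic property of the trace yields $\operatorname{tr}(S_\tau)=\sum_{j=1}^{n}\operatorname{tr}(\tau_j\tau_j^{T})=\sum_{j=1}^{n}\langle\tau_j,\tau_j\rangle=n$ using condition (i). Next I would compute $\operatorname{tr}(S_\tau^{2})$; expanding
\begin{align*}
S_\tau^{2}=\sum_{j,k=1}^{n}\langle\tau_j,\tau_k\rangle\,\tau_j\tau_k^{T}
\end{align*}
and taking traces gives $\operatorname{tr}(S_\tau^{2})=\sum_{j,k=1}^{n}\langle\tau_j,\tau_k\rangle^{2}=n+\sum_{j\neq k}\langle\tau_j,\tau_k\rangle^{2}$, where I have split off the diagonal using (i).

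Now I would invoke condition (iii), which after substituting these traces reads
\begin{align*}
|n|^{2}=\Bigl|\sum_{i=1}^{d}\lambda_i\Bigr|^{2}\leq |d|\,\Bigl|\sum_{i=1}^{d}\lambda_i^{2}\Bigr|=|d|\,\Bigl|n+\sum_{j\neq k}\langle\tau_j,\tau_k\rangle^{2}\Bigr|.
\end{align*}
Finally, I apply the ultrametric inequality twice: once to bound the full sum by the maximum, and once inside the off-diagonal sum, which together with $|\langle\tau_j,\tau_k\rangle|=\gamma$ for $j\neq k$ yields
\begin{align*}
\Bigl|n+\sum_{j\neq k}\langle\tau_j,\tau_k\rangle^{2}\Bigr|\leq\max\bigl\{|n|,\gamma^{2}\bigr\},
\end{align*}
producing the required inequality $|n|^{2}\leq |d|\max\{|n|,\gamma^{2}\}$. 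The two particular cases then follow by unpacking the maximum.

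I do not expect a serious obstacle: the main content is the correct trace bookkeeping, and the key conceptual replacement of Cauchy--Schwarz is exactly hypothesis (iii), which is pre-packaged into the definition. The only subtle point worth double-checking is that the off-diagonal terms involve $\langle\tau_j,\tau_k\rangle^{2}$ (not its absolute value squared), so the bound $|\langle\tau_j,\tau_k\rangle^{2}|=|\langle\tau_j,\tau_k\rangle|^{2}=\gamma^{2}$ must be applied through the multiplicativity of the non-Archimedean valuation, which is standard.
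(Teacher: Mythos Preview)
Your proposal is correct and follows essentially the same route as the paper: compute $\operatorname{tr}(S_\tau)=n$ and $\operatorname{tr}(S_\tau^{2})=\sum_{j,k}\langle\tau_j,\tau_k\rangle^{2}$, invoke the eigenvalue inequality from condition~(iii) of Definition~\ref{EL}, and then apply the ultrametric inequality together with $|\langle\tau_j,\tau_k\rangle|=\gamma$ to bound the off-diagonal sum. The remark about multiplicativity of the valuation is exactly the right justification for the final step, and the two particular cases are immediate.
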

\begin{proof}
   We see that 
\begin{align*}
	&\operatorname{Tra}(S_{\tau})=\sum_{j=1}^n\langle \tau_j, \tau_j \rangle , \\
	& \operatorname{Tra}(S^2_{\tau})=\sum_{j=1}^n\sum_{k=1}^n\langle \tau_j, \tau_k \rangle\langle \tau_k, \tau_j \rangle=\sum_{j=1}^n\sum_{k=1}^n\langle \tau_j, \tau_k \rangle^2.
\end{align*}	
Using Definition \ref{EL}, we get
\begin{align*}
	|n|^2&= \left|\sum_{j=1}^n\langle \tau_j, \tau_j \rangle \right|^2=|\operatorname{Tra}(S_{\tau})|^2=\left|\sum_{j=1}^{d}\lambda_j\right|^2\leq |d| \left|\sum_{j=1}^{d}\lambda_j^2\right|\\
	&=|d|\left|\sum_{j=1}^n\sum_{k=1}^n\langle \tau_j, \tau_k \rangle^2\right|
	=|d|\left|\sum_{j=1}^n\langle \tau_j, \tau_j \rangle^2+\sum_{1\leq j, k \leq n, j \neq k}\langle \tau_j, \tau_k \rangle^2\right|\\
	&=|d|\left|\sum_{j=1}^n1+\sum_{1\leq j, k \leq n, j \neq k}\langle \tau_j, \tau_k \rangle^2\right|=|d|\left|n+\sum_{1\leq j, k \leq n, j \neq k}\langle \tau_j, \tau_k \rangle^2\right|\\
	&\leq |d| \max\left\{|n|, \left|\sum_{1\leq j, k \leq n, j \neq k}\langle \tau_j, \tau_k \rangle^2\right|\right\}\\
	&\leq |d| \max\left\{|n|, \max_{1\leq j,k \leq n, j \neq k}|\langle \tau_j,\tau_k\rangle |^2\right\}= |d|\max\{|n|, \gamma^2 \}.
\end{align*}
\end{proof}
\begin{corollary}
Let $\{\tau_j\}_{j=1}^n$ be a collection  in $\mathbb{Q}^d_p$ satisfying the following.	
\begin{enumerate}[\upshape(i)]
	\item $\langle \tau_j, \tau_j\rangle =1, \quad \forall 1\leq j \leq n$.
	\item 	There exists a $\gamma\geq0$ such that $|\langle \tau_j, \tau_k\rangle| =\gamma, \forall 1\leq j, k \leq n, j \neq k$.
	\item There exists a nonzero element  $b \in \mathbb{Q}_p$ such that 
	\begin{align*}
		bx=\sum_{j=1}^{n}\langle x, \tau_j\rangle \tau_j, \quad \forall x \in \mathbb{Q}^d_p.
	\end{align*} 
Then 	
\begin{align*}
	|n|^2\leq |d|\max\{|n|, \gamma^2 \}.
\end{align*}
\end{enumerate}
\end{corollary}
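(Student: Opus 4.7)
The plan is to observe that the hypothesis of the corollary is a degenerate special case of p-adic equiangular lines in the sense of Definition \ref{EL}, so that Theorem \ref{PE} applies almost verbatim. Concretely, condition (ii) says $S_\tau=bI$, which is already diagonal (similar to itself via the identity) with eigenvalues $\lambda_1=\cdots=\lambda_d=b$. The eigenvalue inequality in Definition \ref{EL}(iii) then reads $|db|^2\le |d|\,|db^2|$, i.e.\ $|d|^2|b|^2\le|d|^2|b|^2$, holding with equality. Combined with condition (i) and reading $\gamma$ as $\max_{j\ne k}|\langle\tau_j,\tau_k\rangle|$ (since equiangularity is not explicitly listed in the corollary, this is the natural choice), Theorem \ref{PE} immediately gives the stated bound.

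An alternative, entirely self-contained route is to re-run the two-line trace calculation that powers the proof of Theorem \ref{PE}. From condition (i) I get $\operatorname{Tra}(S_\tau)=\sum_j\langle\tau_j,\tau_j\rangle=n$, while condition (ii) gives $\operatorname{Tra}(S_\tau)=\operatorname{Tra}(bI)=db$, so $n=db$ and in particular $|n|^2=|d|^2|b|^2=|d|\,|db^2|=|d|\,|\operatorname{Tra}(S_\tau^2)|$. Expanding the other trace as
\begin{align*}
\operatorname{Tra}(S_\tau^2)=\sum_{j,k=1}^{n}\langle\tau_j,\tau_k\rangle^2=n+\sum_{1\le j,k\le n,\,j\ne k}\langle\tau_j,\tau_k\rangle^2,
\end{align*}
the non-Archimedean ultrametric inequality yields
\begin{align*}
|\operatorname{Tra}(S_\tau^2)|\le\max\!\Bigl\{|n|,\ \max_{j\ne k}|\langle\tau_j,\tau_k\rangle|^2\Bigr\}=\max\{|n|,\gamma^2\},
\end{align*}
and multiplying through by $|d|$ finishes the argument.

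I expect no genuine obstacle; the corollary is a direct specialization of the theorem in which the operator $S_\tau$ is not merely similar to a scalar multiple of the identity but literally equals one. The only bookkeeping point worth flagging is the reading of $\gamma$ noted above, and the observation that condition (ii) already forces the relation $n=db$, so the hypothesis silently constrains $n$ to lie in $d\,\mathbb{Q}_p^{\times}$; this is harmless for the bound, since the estimate is proven without invoking that relation beyond the trace identity.
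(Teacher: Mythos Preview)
Your proposal is correct and matches the paper's intent: the corollary is stated without proof precisely because it is the immediate specialization of Theorem~\ref{PE} in which $S_\tau=bI$ is already diagonal with all eigenvalues equal to $b$, so condition~(iii) of Definition~\ref{EL} holds with equality. Your remark that $\gamma$ must be read as $\max_{j\ne k}|\langle\tau_j,\tau_k\rangle|$ (equiangularity not being listed among the hypotheses) is an accurate reading of an omission in the statement rather than a defect in your argument.
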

A careful observation of the proof of Theorem \ref{PE} gives the following general $p$-adic Welch bound.
\begin{theorem} (\textbf{General $p$-adic Welch Bound})
Let $\{\tau_j\}_{j=1}^n$ be a collection  in $\mathbb{Q}^d_p$ satisfying the following. 
\begin{enumerate}[\upshape(i)]
	\item $\langle \tau_j, \tau_j\rangle =1, \quad \forall 1\leq j \leq n$.
	\item The operator 
	\begin{align*}
		S_\tau : \mathbb{Q}_p^d\ni x \mapsto \sum_{j=1}^n\langle x, \tau_j\rangle \tau_j \in \mathbb{Q}_p^d
	\end{align*}
	is similar  to a diagonal operator over $\mathbb{Q}_p$  with eigenvalues $\lambda_1, \dots, \lambda_d \in \mathbb{Q}_p$ satisfying 
	\begin{align*}
		\left|\sum_{j=1}^{d}\lambda_j\right|^2\leq |d| \left|\sum_{j=1}^{d}\lambda_j^2\right|.	
	\end{align*}
\end{enumerate}
Then 
\begin{align*}
	|n|^2\leq |d|\max_{1\leq j,k \leq n, j \neq k}\left\{|n|, |\langle \tau_j,\tau_k\rangle |^2\right\}.
\end{align*}
\end{theorem}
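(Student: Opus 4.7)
The plan is to essentially imitate the proof of Theorem \ref{PE}, observing that the $\gamma$-equiangularity hypothesis was used there only at a single point, namely to rewrite $|\langle\tau_j,\tau_k\rangle|^2$ as $\gamma^2$ for $j\neq k$. In the general setting, we simply replace that single value by the maximum of $|\langle\tau_j,\tau_k\rangle|^2$ over pairs with $j\neq k$.

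Concretely, the first step is to compute the two traces on the nose: expanding $S_\tau$ directly gives $\operatorname{Tra}(S_\tau)=\sum_{j=1}^n \langle \tau_j,\tau_j\rangle = n$ by hypothesis (i), while an analogous expansion yields $\operatorname{Tra}(S_\tau^2) = \sum_{j=1}^n\sum_{k=1}^n \langle \tau_j,\tau_k\rangle^2$. Since the trace is invariant under similarity, hypothesis (ii) lets me identify these with $\sum_{j=1}^d \lambda_j$ and $\sum_{j=1}^d \lambda_j^2$ respectively, so that the hypothesis $|\sum_j \lambda_j|^2 \leq |d|\,|\sum_j \lambda_j^2|$ translates into
\begin{align*}
|n|^2 \leq |d|\left|\sum_{j=1}^n\sum_{k=1}^n \langle \tau_j,\tau_k\rangle^2\right|.
\end{align*}

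The second step is to split the double sum into the diagonal contribution $\sum_{j=1}^n \langle \tau_j,\tau_j\rangle^2 = n$ (again using (i)) and the off-diagonal contribution $\sum_{j\neq k}\langle \tau_j,\tau_k\rangle^2$. Applying the non-Archimedean triangle inequality once gives
\begin{align*}
\left|n + \sum_{j\neq k}\langle \tau_j,\tau_k\rangle^2\right| \leq \max\left\{|n|,\ \left|\sum_{j\neq k}\langle \tau_j,\tau_k\rangle^2\right|\right\},
\end{align*}
and applying the ultrametric inequality a second time to the off-diagonal sum bounds it by $\max_{j\neq k}|\langle\tau_j,\tau_k\rangle|^2$. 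Chaining these inequalities yields the claimed bound $|n|^2 \leq |d|\max_{j\neq k}\{|n|,|\langle\tau_j,\tau_k\rangle|^2\}$.

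There is really no obstacle here: this is a strict generalization whose proof is a transcription of the earlier one, with the ultrametric property doing all the heavy lifting at the final bounding step. The only thing worth checking carefully is that hypothesis (ii) is used in exactly the same place and that no use of equiangularity was secretly hidden elsewhere in the original proof; inspection shows it was not, since condition (i) of Definition \ref{EL} (normalization) is retained and condition (ii) was only invoked to pull $\gamma^2$ out of the maximum.
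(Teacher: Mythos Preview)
Your proposal is correct and follows exactly the approach the paper intends: the paper does not write out a separate proof but simply notes that the result follows by a careful reading of the proof of Theorem~\ref{PE}, and your write-up is precisely that transcription. The only hypothesis dropped is the common angle, and you correctly identify that it enters the original argument only at the final step where $\gamma^2$ is replaced here by $\max_{j\neq k}|\langle\tau_j,\tau_k\rangle|^2$.
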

We can generalize   Definition \ref{EL} in the following way. 
\begin{definition}\label{A}
	Let $p$ be a prime, $d \in \mathbb{N}$, $\gamma \geq0$ and $a\in \mathbb{Q}_p$ be nonzero. A collection $\{\tau_j\}_{j=1}^n$  in $\mathbb{Q}_p^d$	is said to be \textbf{$p$-adic $(\gamma,a)$-equiangular  lines} if the following conditions hold.
\begin{enumerate}[\upshape(i)]
	\item $\langle \tau_j, \tau_j\rangle =a, \quad \forall 1\leq j \leq n$.
	\item $	|\langle \tau_j, \tau_k\rangle| =\gamma, \forall 1\leq j, k \leq n, j \neq k$.
	\item The operator 
	\begin{align*}
		S_\tau : \mathbb{Q}_p^d\ni x \mapsto \sum_{j=1}^n\langle x, \tau_j\rangle \tau_j \in \mathbb{Q}_p^d
	\end{align*}
	is similar  to a diagonal operator over $\mathbb{Q}_p$  with eigenvalues $\lambda_1, \dots, \lambda_d \in \mathbb{Q}_p$ satisfying 
	\begin{align*}
		\left|\sum_{j=1}^{d}\lambda_j\right|^2\leq |d| \left|\sum_{j=1}^{d}\lambda_j^2\right|.	
	\end{align*}
\end{enumerate}	
\end{definition}
Note that division by norm of an element is not allowed in a $p$-adic Hilbert space. 
Thus we can not  reduce Definition \ref{A} to Definition \ref{EL} (unlike in the real case). 
By modifying the proof of Theorem \ref{PE}, we easily get the following results. 
\begin{theorem}
	 If $\{\tau_j\}_{j=1}^n$  is a collection of $p$-adic  $(\gamma,a)$-equiangular lines in $\mathbb{Q}^d_p$, then 
	\begin{align*}
		|n|^2\leq |d|\max\left\{|n|, \frac{\gamma^2}{|a^2|} \right\}.
	\end{align*}
	In particular, we have the following. 
	\begin{enumerate}[\upshape(i)]
		\item If $|a^2n|\leq \gamma^2$, then 
		\begin{align*}
			|n|^2\leq |d|\frac{\gamma^2}{|a^2|}.
		\end{align*}
		\item If $|a^2n|\geq \gamma^2$, then 
		\begin{align*}
			|n|\leq |d|.
		\end{align*}
	\end{enumerate}
\end{theorem}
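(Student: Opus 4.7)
The plan is to mimic the proof of Theorem \ref{PE} with the single change that $\langle \tau_j, \tau_j \rangle = a$ rather than $1$, and then track the factors of $a$ through each step. Concretely, I would first compute the trace quantities: $\operatorname{Tra}(S_\tau) = \sum_{j=1}^n \langle \tau_j, \tau_j\rangle = na$ and $\operatorname{Tra}(S_\tau^2) = \sum_{j,k} \langle \tau_j, \tau_k \rangle^2 = na^2 + \sum_{j \neq k} \langle \tau_j, \tau_k\rangle^2$, exactly as before.

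Next, I would invoke condition (iii) in Definition \ref{A}, which says $\left|\sum_j \lambda_j\right|^2 \leq |d| \left|\sum_j \lambda_j^2\right|$, and identify these spectral sums with the traces above via similarity to a diagonal operator. This yields
\begin{align*}
|n|^2 |a|^2 = |na|^2 = |\operatorname{Tra}(S_\tau)|^2 \leq |d|\, |\operatorname{Tra}(S_\tau^2)| = |d| \left| na^2 + \sum_{j \neq k}\langle \tau_j, \tau_k\rangle^2 \right|.
\end{align*}

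Then I would apply the strong (ultrametric) triangle inequality to the inner sum and bound the off-diagonal term by $\max_{j \neq k} |\langle \tau_j, \tau_k\rangle|^2 = \gamma^2$, giving $|n|^2|a|^2 \leq |d| \max\{|n||a|^2, \gamma^2\}$. Dividing both sides by $|a|^2 = |a^2|$ (which is legitimate in $\mathbb{R}_{\geq 0}$, where we are doing the arithmetic of absolute values, and permissible since $a \neq 0$) delivers the desired inequality $|n|^2 \leq |d| \max\{|n|, \gamma^2/|a^2|\}$. The two cases in the ``in particular'' statement follow by inspection, depending on which term of the maximum dominates: if $|a^2 n| \leq \gamma^2$ the maximum is $\gamma^2/|a^2|$, and otherwise it is $|n|$, in which case $|n|^2 \leq |d||n|$ simplifies to $|n| \leq |d|$.

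I do not anticipate a genuine obstacle here: the proof is essentially a bookkeeping exercise, and the only subtlety to watch is the harmless step of dividing through by $|a^2|$, which is a real (not p-adic) operation and hence unaffected by the warning in the paper that one cannot divide vectors by norms in a p-adic Hilbert space.
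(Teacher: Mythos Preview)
Your proposal is correct and is precisely what the paper intends: the paper does not spell out a separate proof for this theorem but simply says ``By modifying earlier proofs, we easily get following theorems,'' and your argument is exactly that modification of the proof of Theorem~\ref{PE}, tracking the factor $a$ through the trace computation and then dividing the resulting real inequality by $|a|^2$.
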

\begin{corollary}
	Let $\{\tau_j\}_{j=1}^n$ be a collection  in $\mathbb{Q}^d_p$ satisfying the following.	
	\begin{enumerate}[\upshape(i)]
		\item There exists a nonzero element  $a \in \mathbb{Q}_p$ such that  $\langle \tau_j, \tau_j\rangle =a,  \forall 1\leq j \leq n$.
		\item There exists a $\gamma\geq 0$ such that $|\langle \tau_j, \tau_k\rangle| =\gamma, \forall 1\leq j, k \leq n, j \neq k$.
		\item There exists a nonzero element  $b \in \mathbb{Q}_p$ such that 
		\begin{align*}
			bx=\sum_{j=1}^{n}\langle x, \tau_j\rangle \tau_j, \quad \forall x \in \mathbb{Q}^d_p.
		\end{align*} 
		Then 	
	\begin{align*}
		|n|^2\leq |d|\max\left\{|n|, \frac{\gamma^2}{|a^2|} \right\}.
	\end{align*}
	\end{enumerate}
\end{corollary}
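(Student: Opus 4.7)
The plan is to reduce this corollary to the preceding Theorem on p-adic $(\gamma,a)$-equiangular lines by verifying that its hypotheses, namely conditions (i)--(iii) of Definition \ref{A}, all follow from (i) and (ii) here. Condition (i) of the corollary is exactly Definition \ref{A}(i), and I read the symbol $\gamma$ in the conclusion as the common absolute value $|\langle \tau_j, \tau_k\rangle|=\gamma$ for $j \neq k$ of Definition \ref{A}(ii). The only substantive task is then to verify the spectral condition Definition \ref{A}(iii).

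The key observation is that hypothesis (ii) is precisely the statement $S_\tau = bI$. This operator is already diagonal in every basis, with all $d$ eigenvalues equal to $b$; hence $\sum_{j=1}^d \lambda_j = bd$ and $\sum_{j=1}^d \lambda_j^2 = b^2 d$, and the required inequality $\left|\sum_j \lambda_j\right|^2 \leq |d|\,\left|\sum_j \lambda_j^2\right|$ degenerates to $|b|^2|d|^2 \leq |b|^2|d|^2$, which holds trivially (with equality). Invoking the preceding Theorem now delivers $|n|^2 \leq |d|\max\{|n|, \gamma^2/|a^2|\}$.

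Alternatively, one can re-run the trace argument from Theorem \ref{PE} directly, bypassing any reference to the intermediate Theorem. First, I would compute $\operatorname{Tra}(S_\tau)$ in two ways, as $na$ from (i) and as $bd$ from (ii), and similarly $\operatorname{Tra}(S_\tau^2) = b^2d = \sum_{j,k}\langle \tau_j,\tau_k\rangle^2$. Splitting the double sum into diagonal and off-diagonal contributions and applying the strong triangle inequality to
\begin{align*}
|na|^2 = |bd|^2 \leq |d|\cdot|b^2 d| = |d|\left|na^2 + \sum_{j\neq k}\langle \tau_j, \tau_k\rangle^2\right|
\end{align*}
gives $|n|^2|a|^2 \leq |d|\max\{|n||a|^2, \gamma^2\}$, and dividing by $|a|^2$ (legitimate since $a \neq 0$ and $|a^2|=|a|^2$ in the p-adic valuation) yields the claim. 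I do not anticipate a serious obstacle: the hypothesis $S_\tau = bI$ collapses the spectral side of the Welch-type inequality to a tight equality, so this corollary is essentially a verification, with the only delicate step being the final normalization by $|a|^2$.
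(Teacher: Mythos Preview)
Your proposal is correct and matches the paper's implicit approach: the corollary is stated without proof as an immediate consequence of the preceding $(\gamma,a)$-theorem, the point being exactly that $S_\tau = bI$ is diagonal with all eigenvalues equal to $b$, whence $\left|\sum_j \lambda_j\right|^2 = |bd|^2 = |d|\,|b^2d| = |d|\left|\sum_j \lambda_j^2\right|$ and Definition~\ref{A}(iii) holds with equality. Your alternative direct trace computation is likewise fine and simply inlines the proof of that theorem.
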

\begin{theorem}
Let $\{\tau_j\}_{j=1}^n$ be a collection  in $\mathbb{Q}^d_p$ satisfying the following. 
\begin{enumerate}[\upshape(i)]
	\item There exists a nonzero element  $a \in \mathbb{Q}_p$ such that  $\langle \tau_j, \tau_j\rangle =a,  \forall 1\leq j \leq n$.
	\item The operator 
	\begin{align*}
		S_\tau : \mathbb{Q}_p^d\ni x \mapsto \sum_{j=1}^n\langle x, \tau_j\rangle \tau_j \in \mathbb{Q}_p^d
	\end{align*}
	is similar  to a diagonal operator over $\mathbb{Q}_p$  with eigenvalues $\lambda_1, \dots, \lambda_d \in \mathbb{Q}_p$ satisfying 
	\begin{align*}
		\left|\sum_{j=1}^{d}\lambda_j\right|^2\leq |d| \left|\sum_{j=1}^{d}\lambda_j^2\right|.	
	\end{align*}
\end{enumerate}
Then 
\begin{align*}
	|n|^2\leq |d|\max_{1\leq j,k \leq n, j \neq k}\left\{|n|, \frac{|\langle \tau_j,\tau_k\rangle |^2}{|a^2|}\right\}.
\end{align*}	
\end{theorem}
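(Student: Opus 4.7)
The plan is to adapt the trace argument used in Theorem~\ref{PE}, carrying the extra scalar $a$ through the computation. The two key identities are
\begin{align*}
\operatorname{Tra}(S_\tau) &= \sum_{j=1}^{n}\langle \tau_j,\tau_j\rangle = na,\\
\operatorname{Tra}(S_\tau^2) &= \sum_{j=1}^{n}\sum_{k=1}^{n}\langle \tau_j,\tau_k\rangle^2,
\end{align*}
which come from expanding $S_\tau^2 x = \sum_{j,k}\langle x,\tau_k\rangle\langle \tau_k,\tau_j\rangle \tau_j$ and taking the trace in the basis making $S_\tau$ diagonal (using that the trace of a diagonalizable operator equals the sum of eigenvalues and is similarity invariant).

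Next, I would apply hypothesis (ii), namely $|\sum_j \lambda_j|^2 \leq |d|\,|\sum_j \lambda_j^2|$, to the eigenvalues of $S_\tau$. This gives
\begin{align*}
|n|^2|a|^2 = |na|^2 = \left|\sum_{j=1}^{d}\lambda_j\right|^2 \leq |d|\left|\sum_{j=1}^{d}\lambda_j^2\right| = |d|\left|na^2+\sum_{\substack{1\leq j,k\leq n\\ j\neq k}}\langle\tau_j,\tau_k\rangle^2\right|.
\end{align*}

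From here the non-Archimedean (ultrametric) inequality bounds the right-hand side by
\begin{align*}
|d|\max\left\{|n||a^2|,\ \max_{\substack{1\leq j,k\leq n\\ j\neq k}}|\langle\tau_j,\tau_k\rangle|^2\right\},
\end{align*}
and dividing through by $|a|^2=|a^2|$ (which is nonzero by hypothesis) yields the claimed bound. The only substantive step is making sure the ultrametric estimate on $\sum_{j\neq k}\langle\tau_j,\tau_k\rangle^2$ is controlled pointwise by $\max_{j\neq k}|\langle\tau_j,\tau_k\rangle|^2$; this is immediate from the strong triangle inequality and requires no modification from the proof of Theorem~\ref{PE}. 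In short, this theorem is a direct generalization where $a$ plays the role previously played by $1$, and I expect no serious obstacle beyond careful bookkeeping of the factor $|a^2|$.
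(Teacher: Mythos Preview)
Your proposal is correct and follows exactly the approach the paper intends: it explicitly says this result is obtained ``by modifying earlier proofs,'' and your argument is precisely the modification of the proof of Theorem~\ref{PE} with the factor $a$ carried through and removed at the end by dividing by $|a^2|$.
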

Note that there is a universal bound for equiangular lines known as Gerzon bound. 
\begin{theorem} \cite{WALDRONBOOK} \label{GBT}
(\textbf{Gerzon Universal Bound})
Let $\{\tau_j\}_{j=1}^n$  	be  a collection of $\gamma$-equiangular  lines in $\mathbb{R}^d$. 
	Then 
	\begin{align*}
	n \leq \frac{d(d+1)}{2}.
\end{align*}	
\end{theorem}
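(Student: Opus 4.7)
The plan is to use the classical outer-product argument in the space of symmetric matrices. To each unit vector $\tau_j\in\mathbb{R}^d$ I would associate the rank-one symmetric operator $P_j=\tau_j\tau_j^{T}$. Each $P_j$ lies in $\operatorname{Sym}_d(\mathbb{R})$, the real vector space of symmetric $d\times d$ matrices, whose dimension is $d(d+1)/2$. Once I know that $\{P_j\}_{j=1}^{n}$ is linearly independent in $\operatorname{Sym}_d(\mathbb{R})$, the bound $n\leq d(d+1)/2$ is immediate.

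To establish linear independence I would form the Gram matrix $G\in\mathbb{R}^{n\times n}$ of the $P_j$'s with respect to the Hilbert--Schmidt (Frobenius) inner product and compute
\begin{align*}
G_{jk}=\operatorname{tr}(P_j P_k)=\langle\tau_j,\tau_k\rangle^{2}.
\end{align*}
By the unit-norm and $\gamma$-equiangularity hypotheses this gives $G_{jj}=1$ and $G_{jk}=\gamma^{2}$ for $j\neq k$, so $G=(1-\gamma^{2})I_n+\gamma^{2}J_n$, where $J_n$ is the all-ones $n\times n$ matrix. A direct diagonalization yields the spectrum $\{1+(n-1)\gamma^{2},\,1-\gamma^{2},\dots,1-\gamma^{2}\}$, all of whose entries are strictly positive for $\gamma\in[0,1)$. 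Consequently $G$ is positive definite, the $P_j$ are linearly independent, and the dimension count gives $n\leq d(d+1)/2$.

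The boundary case $\gamma=1$ is handled separately: it forces $\tau_k=\pm\tau_j$ for all $j,k$, so there is only a single equiangular line and the bound holds trivially. The one genuine ingredient is the non-singularity of the Gram matrix; everything else is a dimension count. Since the Gerzon bound as stated is a classical real-variable result, no non-Archimedean machinery enters, and the p-adic considerations developed earlier in the paper do not play a role in the proof.
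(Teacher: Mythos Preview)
Your argument is the standard and correct proof of the Gerzon bound: map each unit vector to its rank-one projector in $\operatorname{Sym}_d(\mathbb{R})$, compute the Gram matrix in the Frobenius inner product, observe it equals $(1-\gamma^{2})I_n+\gamma^{2}J_n$ with strictly positive spectrum for $\gamma<1$, and conclude linear independence and hence $n\le \dim\operatorname{Sym}_d(\mathbb{R})=d(d+1)/2$. The degenerate case $\gamma=1$ is disposed of exactly as you indicate.

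There is, however, nothing in the paper to compare this against. Theorem~\ref{GBT} is merely quoted from \cite{WALDRONBOOK} without proof; the paper only remarks afterward that it was unable to derive a p-adic analogue. So your proposal is not an alternative to the paper's argument but rather a supplied proof where the paper offers none.
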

We are unable to derive $p$-adic version of Theorem \ref{GBT}.  However, we derive the following  Gerzon bound under some additional mild conditions.
\begin{theorem}	\label{PGERZON}
	Let $\{\tau_j\}_{j=1}^n$ be a collection in $\mathbb{Q}^d_p$  satisfying the following.
	\begin{enumerate}[\upshape(i)]
		\item There exists a nonzero element  $a \in \mathbb{Q}_p$ such that  $\langle \tau_j, \tau_j\rangle =a,  \forall 1\leq j \leq n$.
		\item There is an element  $b \in \mathbb{Q}_p$ such that $\langle \tau_j, \tau_k\rangle^2 =b, \forall  1\leq j,k \leq n, j\neq k.$
		\item $a^2\neq b$. 
	\end{enumerate}
	Then 
	\begin{align*}
		n \leq  \frac{d(d+1)}{2}.	
	\end{align*}
\end{theorem}
\begin{proof}
	For $1\leq j \leq n$, define 
	\begin{align*}
		\tau_j \otimes \tau_j: \mathbb{Q}^d_p \ni x \mapsto(\tau_j \otimes \tau_j)(x)\coloneqq \langle x, \tau_j\rangle \tau_j \in \mathbb{Q}^d_p.
	\end{align*}
	We wish to show that the collection $\{\tau_j \otimes \tau_j\}_{j=1}^n$   is linearly independent  over  $ \mathbb{Q}_p$. Let $c_1, \dots,  c_n \in  \mathbb{Q}_p$ be such that 
	\begin{align*}
		\sum_{j=1}^{n}c_j(\tau_j \otimes \tau_j)=0.
	\end{align*}
	Let $1\leq k \leq n$ be fixed. Then the previous equation gives 
	\begin{align*}
		\sum_{j=1}^{n}c_j(\tau_j \otimes \tau_j)(\tau_k \otimes \tau_k)	=0.
	\end{align*}
	By taking trace we get
	\begin{align*}
		0&=\sum_{j=1}^{n}c_j\operatorname{Tra}((\tau_j \otimes \tau_j)(\tau_k \otimes \tau_k))=\sum_{j=1}^{n}c_j\langle \tau_j, \tau_k\rangle^2\\
		&=\sum_{j=1, j \neq k}^{n}c_j\langle \tau_j, \tau_k\rangle^2+c_k\langle \tau_k, \tau_k\rangle^2=\sum_{j=1, j \neq k}^{n}c_jb+c_ka^2\\
		&=b\left(\sum_{j=1}^{n}c_j-c_k\right)+c_ka^2=b\left(\sum_{j=1}^{n}c_j\right)+(a^2-b)c_k.
	\end{align*}
	Therefore, 
	\begin{align*}
		c_k=\frac{b}{b-a^2}\sum_{j=1}^{n}c_j=:c, \quad \forall 1\le k \leq n.	
	\end{align*}
	Finally, we get 
	\begin{align*}
		0&=\operatorname{Tra}\left(\sum_{j=1}^{n}c_j(\tau_j \otimes \tau_j)\right)=\operatorname{Tra}\left(\sum_{j=1}^{n}c(\tau_j \otimes \tau_j)\right)\\
		&=\sum_{j=1}^{n}c\operatorname{Tra}(\tau_j \otimes \tau_j)=\sum_{j=1}^{n}c\langle \tau_j, \tau_j\rangle=can.
	\end{align*}
	Since $a\neq 0$, we have $c=0$. Therefore, $\{\tau_j \otimes \tau_j\}_{j=1}^n$ is linearly independent.  Since $\{\tau_j \otimes \tau_j\}_{j=1}^n$ is given by a symmetric $n$ by $n$ matrix over $\mathbb{Q}_p$ (w.r.t. the standard basis) and the dimension of vector space of symmetric $n$ by $n$ matrices  over $\mathbb{Q}_p$ is $d(d+1)/2$,  we must have $n\leq d(d+1)/2$.
\end{proof}
It is clear that throught the paper we can replace $\mathbb{Q}_p$ by any non-Archimedean field.

\section{Acknowledgments}
This research was partially supported by the University of Warsaw Thematic Research Programme ``Quantum Symmetries". Theorem \ref{PGERZON} is motivated by a question raised by the anonymous reviewer. We thank the reviewer for his/her many questions and suggestions, which helped to substantially improve the paper.

 \bibliographystyle{plain}

\begin{thebibliography}{}

\end{thebibliography}


\begin{thebibliography}{10}
	
	\bibitem{ALBEVERIO}
	S.~Albeverio, J.~M. Bayod, C.~Perez-Garcia, R.~Cianci, and A.~Khrennikov.
	\newblock Non-{A}rchimedean analogues of orthogonal and symmetric operators and
	{$p$}-adic quantization.
	\newblock {\em Acta Appl. Math.}, 57(3):205--237, 1999.
	
	\bibitem{DIAGANABOOK}
	T.~Diagana.
	\newblock {\em Non-{A}rchimedean linear operators and applications}.
	\newblock Nova Science Publishers, Inc., Huntington, NY, 2007.
	
	\bibitem{DIAGANARAMAROSON}
	T.~Diagana and F.~Ramaroson.
	\newblock {\em Non-{A}rchimedean operator theory}.
	\newblock SpringerBriefs in Mathematics. Springer, Cham, 2016.
	
	\bibitem{HAANTJES}
	J.~Haantjes.
	\newblock Equilateral point-sets in elliptic two- and three-dimensional spaces.
	\newblock {\em Nieuw Arch. Wiskunde (2)}, 22:355--362, 1948.
	
	\bibitem{KALISCH}
	G.~K. Kalisch.
	\newblock On {$p$}-adic {H}ilbert spaces.
	\newblock {\em Ann. of Math. (2)}, 48:180--192, 1947.
	
	\bibitem{KHRENNIKOV}
	A.~Khrennikov.
	\newblock The ultrametric {H}ilbert-space description of quantum measurements
	with a finite exactness.
	\newblock {\em Found. Phys.}, 26(8):1033--1054, 1996.
	
	\bibitem{MAHESHKRISHNA}
	K.~M. Krishna.
	\newblock {$p$}-adic {W}elch bounds and {$p$}-adic {Z}auner conjecture.
	\newblock {\em p-Adic Numbers Ultrametric Anal. Appl.}, 16(3):264--274, 2024.
	
	\bibitem{LEMMENSSEIDEL}
	P.~W.~H. Lemmens and J.~J. Seidel.
	\newblock Equiangular lines.
	\newblock {\em J. Algebra}, 24:494--512, 1973.
	
	\bibitem{VANLINTSEIDEL}
	J.~H. van Lint and J.~J. Seidel.
	\newblock Equilateral point sets in elliptic geometry.
	\newblock {\em Indag. Math.}, 28:335--348, 1966.
	
	\bibitem{WALDRONBOOK}
	S.~F.~D. Waldron.
	\newblock {\em An introduction to finite tight frames}.
	\newblock Applied and Numerical Harmonic Analysis. Birkh\"{a}user/Springer, New
	York, 2018.
	
\end{thebibliography}

\end{document}